\documentclass[11pt]{amsart}
\usepackage{amsmath}

\newcommand{\abs}[1]{\vert #1\vert}  
\newcommand{\ov}{\overline}
\newcommand{\ii}{\mathrm{i}}
\newcommand{\dif}{\mathrm{d}}

\DeclareMathOperator{\tr}{trace}
\DeclareMathOperator{\di}{div}
\DeclareMathOperator{\gr}{grad}

\DeclareMathOperator{\Ric}{Ric}
\DeclareMathOperator{\Hess}{Hess}

\newcommand{\CC}{\mathbb{C}}

\newcommand{\DD}{\mathcal{D}} 
\newcommand{\RR}{\mathbb{R}}

\newtheorem{te}{Theorem}
\newtheorem{pr}{Proposition}
\newtheorem{co}{Corollary}
\newtheorem{lm}{Lemma}
\newtheorem{de}{Definition}    
\newtheorem{re}{Remark}
\newtheorem{ex}{Example}

\begin{document}

\title[Biharmonic holomorphic maps]{Biharmonic holomorphic maps and conformally K\"ahler geometry} 

\author{M. Benyounes, E. Loubeau}
\address{D\'epartement de Math\'ematiques, Universit\'e de Bretagne Occidentale, 6,
avenue Victor Le Gorgeu, CS 93837, 29238 Brest Cedex 3, France}
\email{Michele.Benyounes@univ-brest.fr $\text{and}$ Eric.Loubeau@univ-brest.fr}

\author{R. Slobodeanu}
\address{Department of Theoretical Physics and Mathematics, Faculty of Physics, University of Bucharest, P.O. Box Mg-11, RO--077125 Bucharest-M\u agurele, Romania.}
\email{radualexandru.slobodeanu@g.unibuc.ro}

\thanks{The authors thank to Liviu Ornea for many helpful comments on l.c.K. geometry. This article was written while the third author was invited professor at the University of Brest.}

\subjclass[2010]{32Q60, 58E20.}

\keywords{Biharmonic map, holomorphic map, Lee vector field, locally conformally K\"ahler manifold.}

\begin{abstract}
We give conditions on the Lee vector field of an almost Hermitian manifold such that any holomorphic map from this manifold into a  $(1,2)$-symplectic manifold must satisfy the fourth-order condition of being biharmonic, hence generalizing the Lichnerowicz theorem on harmonic maps. These third-order non-linear  conditions are shown to greatly simplify on l.c.K. manifolds and construction methods and examples are given in all dimensions.
\end{abstract}

\maketitle

\section{Introduction}

Whether seen as a generalization of closed geodesics or an infinite dimensional Morse theory, harmonic maps are one of the most important subjects among geometric variational problems. Their study has given rise to a whole array of questions from existence to regularity and stability and has been the impetus to many new tools and techniques. The subtle balance of this problem, born out of its dual nature, analytical and geometrical, has led to an existence theory marked by contrast between abundance results (the original Eells-Sampson theorem or maps between two-spheres), partial existence (e.g. from the two-torus to the two-sphere) and total lack, for example by applying the Bochner formula or sections with the Sasaki metric (\cite{EL1,EL2,EL3}).

When harmonic maps are absent or found wanting, one can seek mappings as close to harmonic as possible without actually being so, by devising a functional measuring the failure of harmonicity, i.e. the $L^2$-norm of the tension field, its vanishing characterizing harmonic maps. Critical points of this bi-energy are called biharmonic maps and its associated Euler-Lagrange equation is a fourth-order system of elliptic PDE's. The high order of this equation as well as the explicit curvature term involved in it, make not only existence results difficult to reach but even examples, or families of examples, hard to come by. While some constructions exist, notably the composition of minimal immersions with the 45-th parallel map of spheres, in most instances biharmonic maps are few and far in between.

One redeeming feature is that, as a variational problem, all the techniques and tools employed to investigate harmonic maps can be tried on biharmonic maps, in particular, and this in spite of the more analytical than geometrical essence of biharmonicity, constructions guided by geometric insight to reduce the complexity of the problem. The first and foremost of such techniques is to work with equivariant maps and lower the number of variables, yielding a fourth-order ODE, which carries its own difficulties (cf.~\cite{M-R}).

An alternative objective would be to decrease the order of the (system of) equations, combining an auxiliary equation with a geometric condition, e.g. turning harmonic maps into biharmonic ones by conformal transformations (\cite{BK}). In the context of Hermitian geometry, a readily available class of mappings is the holomorphic ones and the blue-print for this work is the article of Lichnerowicz (\cite{L}) giving conditions on almost complex structures (a cosymplectic domain and $(1,2)$-symplectic target) ensuring that holomorphic maps are harmonic, hence uncoupling the second-order tension field into two first-order equations. For the fourth-order equation defining biharmonic maps, there does not appear to be any corresponding (metric-independent) second-order condition which would enable an even splitting of the order of derivation between maps and complex structures.

The purpose of this article is to follow Lichnerowicz's original approach and show that it is indeed possible to determine conditions (necessarily weaker than in~\cite{L}) such that holomorphic maps satisfy the harder condition of biharmonicity, and produce examples. These conditions are perforce of third order and non-linear, and difficult to solve to the extent of being seemingly unproductive. Surprisingly, when working with the class of locally conformal K\"ahler (l.c.K.) manifolds, they become more amenable to geometric interpretation to the point of allowing the construction of some examples.

In the Gray-Hervella classification (\cite{GH}) of almost Hermitian manifolds, (1,2)-symplectic manifolds correspond to the class $\mathcal{W}_1 \oplus \mathcal{W}_2$, while the cosymplectic ones form $\mathcal{W}_1 \oplus \mathcal{W}_2 \oplus \mathcal{W}_3$. If we assume the codomain to be (1,2)-symplectic, in order to avoid recovering Lichnerowicz's result, the most reasonable class to consider for the domain is $\mathcal{W}_4$ which contains l.c.K. manifolds. These manifolds are defined by the existence of an open cover such that locally the metric is conformal to a K\"ahler metric and, crucially to us, this translates into the existence of a global closed vector field $B$, the Lee vector field, proportional to the divergence of the almost complex structure, whose dual one-form $\theta$ satisfies the integrability condition of the fundamental two-form $\Omega$: $\dif \Omega=\theta \wedge \Omega$. 

Locally conformal K\"ahler manifolds have been extensively studied since Vaisman's pioneering paper~\cite{Vais0} and the monograph \cite{DO} and surveys \cite{orn1, orn2} give an excellent account of the latest developments. 

On the one hand, some manifolds not supporting any K\"ahler  structure, turn out to admit l.c.K. metrics. Compact examples include diagonal Hopf manifolds (\cite{GO}), Inoue surfaces (\cite{Tri}) and the complex surfaces classified by Belgun (\cite{Bel}), Oeljeklaus-Toma manifolds (\cite{OT}) and generalized Thurston manifolds, while non-compact examples are rarer, e.g. \cite{renaud}. On the other hand, on manifolds known to admit a K\"ahler structure one can take representatives in the conformal class of the original metric turning them into (non-K\"ahler) globally conformal K\"ahler (g.c.K) manifolds, or prescribe new (non-g.c.K.) l.c.K. metrics as in \cite{Vais0}. By the local nature of our equations, any example can serve as testing ground for the biharmonicity conditions of Theorem \ref{theo} and the wealth of l.c.K. geometry allows enough freedom to construct non-trivial examples on which holomorphic maps are automatically biharmonic. Note that neither of the two conditions of Theorem~\ref{theo} is automatically satisfied on l.c.K. manifolds and the final section should help establishing general conditions on almost Hermitian manifolds making holomorphic maps biharmonic. 

Throughout the paper, manifolds, metrics, and maps are assumed to be smooth, and $(M,g)$ is a connected Riemannian manifold. We denote by $\nabla$ the Levi-Civita connection of $(M,g)$, and we use the following sign conventions for the curvature tensor field
$$R(X,Y)Z=\nabla_X\nabla_Y Z-\nabla_Y \nabla_X Z-\nabla_{[X,Y]}Z,$$  and $\Delta f = \tr \nabla \dif f$ for the Laplacian on functions. 

If $(M,J)$ is an almost complex manifold, we denote its real and complex dimensions by $\dim_{\RR} M=m$ and $\dim_{\CC} M=n$, so $m=2n$. The splitting of the complexified tangent bundle in $\pm \mathrm{i}$-eigenspaces is
$T^{\CC}M = T^{\prime}M \oplus T^{\prime \prime}M$, and an adapted orthonormal frame of the form
$\{ Z_{j}=\tfrac{1}{\sqrt{2}}(e_j - \mathrm{i}Je_j), Z_{\bar{\jmath}}=\ov{Z}_j \}_{j=1, 2, ..., n}$ will be called a \emph{Hermitian frame}. Summation on repeated indices is also assumed.

\section{Biharmonic maps}

Let $\varphi:(M,g) \to (N,h)$ be a smooth map between Riemannian manifolds and let $\nabla^{\varphi}$ denote the usual connection on the pull-back bundle, $\varphi^{-1}TN$. The rough Laplacian and Ricci operator on a section $v$ of $\varphi^{-1}TN$ are 
$$\tr (\nabla^\varphi)^2 v =
\sum_{i=1}^m  \nabla_{e_i}^{\varphi}\nabla_{e_i}^{\varphi}v
- \nabla_{\nabla_{e_i}^M e_i}^{\varphi} v 
$$
$$
\Ric^{\varphi}v=\sum_{i=1}^m R^N (v, \dif \varphi(e_i))\dif \varphi(e_i),
$$
where $\{e_i\}_{i=1,...,m}$ is any local orthonormal frame on $M$. 

Recall that the \textit{tension field} of $\varphi$ is defined as
$\tau(\varphi)=\tr \nabla \dif \varphi$, where $\nabla \dif \varphi$ is the second fundamental form of $\varphi$. Then the \textit{bienergy} of the map $\varphi$ is (\cite{EL1})
$$
E_2(\varphi) =\dfrac{1}{2}\int_M \abs{\tau(\varphi)}^2 \nu_g,
$$
and $\varphi$ is \textit{biharmonic} if it is a critical point of $E_2$, equivalently, if it satisfies the associated
Euler-Lagrange equation, i.e. the vanishing of the \textit{bitension field}, (\cite{Jia}) 
\begin{equation}\label{biha}
\tau_2(\varphi)= \big(\tr (\nabla^\varphi)^2+\Ric ^{\varphi}\big)\big(\tau(\varphi)\big)=0. 
\end{equation}

Clearly harmonic maps are biharmonic but, if $M$ is compact and $N$ has non-positive sectional curvature, \textit{any} biharmonic map is harmonic (\cite{Jia}).

For isometric immersions, compactness cannot be dropped (\cite{TOu}), nevertheless biharmonic surfaces in the 3-dimensional Euclidean or hyperbolic space forms must be minimal (\cite{CMO+, Jia}) and in higher dimensions, biharmonic \textit{properly immersed} submanifolds in $\mathbb{E}^k$ are minimal (\cite{Ak}).

Contrastingly for positively curved codomains constructions are known, e.g. biharmonic curves in $\mathbb{S}^{n}$ are explicitly known (\cite{CMO+}), and the composition of a minimal immersion with the 45-th parallel immersion of 
$\mathbb{S}^{n-1}(1/\sqrt{2})$ in $\mathbb{S}^{n}$ is biharmonic but not harmonic (\cite{CMO+}). Moreover, $\mathbb{S}^{2}(1/\sqrt{2})$ is essentially the only nonharmonic biharmonic surface of $\mathbb{S}^{3}$ (\cite{CMO}), while, in $\mathbb{S}^4$ there exist closed orientable embedded nonminimal biharmonic surfaces of arbitrary genus (\cite{CMO+}). In the non-orientable case, a nonminimal biharmonic embedding of $\RR P^2$ in $\mathbb{S}^5$ is explicitly given in \cite{CMO+}.

Examples of (conformal) biharmonic maps between equidimensional manifolds are also known in dimension 3 and 4 (\cite{BFO, BK}), e.g. the inverse map of the stereographic projection of $\mathbb{S}^4$ or the inversion on $\RR^{4}\setminus \{0\}$. 

Submersive biharmonic maps have also been constructed from the Hopf map, after composition with an inversion or stereographic projection (\cite{BFO}).  
A more general scheme (\cite{BK, Sed}) is to start with some harmonic map and render it biharmonic by conformal transformations of the (co)domain metric; this yields the construction of  examples from $\RR^{4}$ to $\RR \times \mathbb{S}^2$, from $\RR^{3}$ to $\RR^{2}$ or the identity map of $\RR_{+}^{m}$. However the picture is more rigid in low dimensions, as a biharmonic Riemannian submersion from a 3-dimensional space form into a surface is necessarily harmonic (\cite{WOu}).

This quick review of the construction of (nonharmonic) biharmonic maps shows how sporadic the available examples are and highlights the need for a more systematic approach. Since most of the techniques developed for harmonic maps apply to biharmonic maps, though usually harder to implement, in the next section we will follow Lichnerowicz' method to obtain conditions such that holomorphic maps become biharmonic.

\section{Biharmonic holomorphic maps}

This section establishes conditions on the domain forcing holomorphic maps into a $(1,2)$-symplectic target, to be biharmonic. 
The key observation is that for such mappings the tension field is nothing but the image of the (scaled) Lee vector field and thus  the computation of the bitension field only requires a formula on the commutation of the differential of the map and second order covariant derivatives.
This then easily leads, in Theorem \ref{theo}, to the identification of a pair of sufficient conditions on the Lee vector field. Though these equations are automatically satisfied by cosymplectic structures, in general they are unwieldy and, in the following section, we will have to consider a quite particular class of complex manifolds before being able to construct solutions. Since all of the computations are carried out on the complexification of the tangent bundle, the resulting conditions can be re-written (Corollary \ref{real}) on the real tangent bundle, as required by the nature of our problem.

\begin{pr} \label{chain}
Let $\varphi: (M,g) \to (N,h)$ be a smooth map between Riemannian manifolds. If $V$ is a smooth vector field on $M$, then
\begin{equation}\label{ohn}
\begin{split}
\big(\tr (\nabla^\varphi)^2+\Ric ^{\varphi}\big)\big(\dif \varphi(V)\big) = & \, \dif \varphi\big((\tr \nabla^2 + \Ric)(V)\big) \\
& + \nabla_{V}^{\varphi}\tau(\varphi) + 2\mathrm{trace} \nabla \dif \varphi(\cdot, \nabla_{\cdot}V).
\end{split}
\end{equation}
\end{pr}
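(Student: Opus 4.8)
The plan is to expand the left-hand side by differentiating $\dif\varphi(V)$ twice through the pull-back connection, using repeatedly the defining identity of the second fundamental form $\nabla^\varphi_X(\dif\varphi(Y)) = \nabla\dif\varphi(X,Y) + \dif\varphi(\nabla_X Y)$, and then to convert the resulting third-order term into $\nabla^\varphi_V\tau(\varphi)$ by a commutation (Ricci) identity. Since both sides of \eqref{ohn} are tensorial in $V$, I would fix a point $p\in M$ and compute in a geodesic orthonormal frame $\{e_i\}$, i.e. one with $\nabla_{e_i}e_j|_p=0$; this annihilates the $\nabla^\varphi_{\nabla_{e_i}e_i}$ and $\nabla_{\nabla_{e_i}e_i}$ corrections in both rough Laplacians and also kills $\nabla_V e_i|_p$, so that at $p$ the operators $\tr(\nabla^\varphi)^2$ and $\tr\nabla^2$ reduce to plain iterated derivatives.

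First I would compute $\nabla^\varphi_{e_i}(\dif\varphi(V)) = \nabla\dif\varphi(e_i,V) + \dif\varphi(\nabla_{e_i}V)$ and apply $\nabla^\varphi_{e_i}$ once more, treating $\nabla\dif\varphi$ as a section of $T^*M\otimes T^*M\otimes\varphi^{-1}TN$ and expanding its covariant derivative. At $p$ the result splits into three groups: a purely third-order term $\sum_i(\nabla_{e_i}\nabla\dif\varphi)(e_i,V)$, a cross term $2\sum_i\nabla\dif\varphi(e_i,\nabla_{e_i}V)$ — the factor $2$ arising because the same expression is produced both by differentiating the $V$-slot of $\nabla\dif\varphi(e_i,V)$ and by differentiating $\dif\varphi(\nabla_{e_i}V)$ — and a base term $\sum_i\dif\varphi(\nabla_{e_i}\nabla_{e_i}V) = \dif\varphi(\tr\nabla^2 V)$. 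The cross term is already the last summand of \eqref{ohn}.

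The crux is the third-order term, and here I would use the symmetry of the second fundamental form, $\nabla\dif\varphi(e_i,V)=\nabla\dif\varphi(V,e_i)$, together with the commutation identity $(\nabla_X\nabla\dif\varphi)(Y,Z) - (\nabla_Y\nabla\dif\varphi)(X,Z) = R^N(\dif\varphi(X),\dif\varphi(Y))\dif\varphi(Z) - \dif\varphi(R(X,Y)Z)$, which is the Ricci identity for the bundle $T^*M\otimes\varphi^{-1}TN$ applied to the section $\dif\varphi$. Taking $X=e_i$, $Y=V$, $Z=e_i$ and summing gives
$$\sum_i(\nabla_{e_i}\nabla\dif\varphi)(e_i,V) = \nabla^\varphi_V\tau(\varphi) + \sum_i R^N(\dif\varphi(e_i),\dif\varphi(V))\dif\varphi(e_i) - \dif\varphi\Big(\sum_i R(e_i,V)e_i\Big),$$
where I use that $\sum_i(\nabla_V\nabla\dif\varphi)(e_i,e_i) = \nabla^\varphi_V\tau(\varphi)$ at $p$ (again because $\nabla_V e_i|_p=0$).

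It then remains to identify the two curvature sums. By the antisymmetry of $R^N$ in its first two arguments, $\sum_i R^N(\dif\varphi(e_i),\dif\varphi(V))\dif\varphi(e_i) = -\Ric^\varphi(\dif\varphi(V))$, and with the paper's curvature sign convention $\sum_i R(e_i,V)e_i = -\Ric(V)$, whence $-\dif\varphi(\sum_i R(e_i,V)e_i) = \dif\varphi(\Ric V)$. Collecting the three groups and transferring $-\Ric^\varphi(\dif\varphi(V))$ to the left-hand side yields exactly \eqref{ohn}. I expect the only genuine obstacle to be the sign bookkeeping: keeping the two curvature conventions consistent so that the antisymmetry of $R^N$ places $\Ric^\varphi$ on the correct side and the domain term assembles with $\dif\varphi(\tr\nabla^2 V)$ into $\dif\varphi((\tr\nabla^2+\Ric)V)$, and not dropping the factor $2$ in the cross term.
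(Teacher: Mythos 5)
Your argument is correct, and it reaches \eqref{ohn} by a genuinely different organization of the computation than the paper's. You pivot on the packaged commutation (Ricci) identity for the section $\dif\varphi$ of $T^*M\otimes\varphi^{-1}TN$, namely $(\nabla_X\nabla\dif\varphi)(Y,Z)-(\nabla_Y\nabla\dif\varphi)(X,Z)=R^N(\dif\varphi(X),\dif\varphi(Y))\dif\varphi(Z)-\dif\varphi(R(X,Y)Z)$, applied in a normal frame at a point so that all frame-correction terms vanish; the curvature operators $\Ric^\varphi$ and $\Ric$ then appear as the two curvature contributions of that single identity. The paper instead works in an arbitrary orthonormal frame and never invokes this identity as a lemma: it expands $\Ric^\varphi(\dif\varphi(V))$ directly as the commutator $\nabla^\varphi_V\nabla^\varphi_{e_i}\dif\varphi(e_i)-\nabla^\varphi_{e_i}\nabla^\varphi_V\dif\varphi(e_i)-\nabla^\varphi_{[V,e_i]}\dif\varphi(e_i)$, adds it to the expansion of the rough Laplacian, and lets the third-order terms $\pm\nabla^\varphi_{e_i}\nabla\dif\varphi(e_i,V)$ cancel by the symmetry of the second fundamental form, with a separate small argument showing $\sum_i\nabla\dif\varphi(e_i,\nabla_Ve_i)=0$. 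In effect you prove the commutation identity once and cleanly, at the cost of justifying the normal-frame reductions (which you do: tensoriality in $V$, and $\nabla_Ve_i|_p=0$ since $V$ is a combination of the $e_j$ at $p$); the paper's route avoids any pointwise frame normalization and keeps every term explicit, which makes the cancellations visible but the bookkeeping heavier. Both yield the same signs under the stated convention $R(X,Y)=[\nabla_X,\nabla_Y]-\nabla_{[X,Y]}$ and $\Ric^\varphi v=\sum_iR^N(v,\dif\varphi(e_i))\dif\varphi(e_i)$, so your transfer of $-\Ric^\varphi(\dif\varphi(V))$ to the left-hand side is exactly right.
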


\begin{proof}
Consider two Riemannian manifolds $(M^m,g)$ and $(N,h)$, a vector field $V$ on $M$ and a smooth map $\varphi: M \to N$. Let $\{e_i\}_{i=1,..., m}$ be a local orthonormal frame on $(M,g)$. Then
\begin{align*}
\nabla^{\varphi}_{e_i}\nabla^{\varphi}_{e_i}\dif \varphi(V)&= \nabla^{\varphi}_{e_i}\big(\nabla \dif \varphi (e_i, V) + \dif \varphi(\nabla_{e_i} V)\big)\\
&=\nabla^{\varphi}_{e_i}\nabla \dif \varphi(e_i, V) +\nabla \dif \varphi(e_i, \nabla_{e_i} V)+ \dif \varphi(\nabla_{e_i}\nabla_{e_i} V);\\
-\nabla^{\varphi}_{\nabla_{e_i} e_i}\dif \varphi(V)&=
-\nabla \dif \varphi(\nabla_{e_i} e_i, V) -
\dif \varphi(\nabla_{\nabla_{e_i} e_i} V)
\end{align*}
and 
\begin{align*}
\Ric^{\varphi} (\dif \varphi(V))&=
\nabla^{\varphi}_{V}\nabla^{\varphi}_{e_i}\dif \varphi(e_i)
-\nabla^{\varphi}_{e_i}\nabla^{\varphi}_{V}\dif \varphi(e_i)
-\nabla^{\varphi}_{[V, e_i]}\dif \varphi(e_i)\\
&=
\nabla^{\varphi}_{V}\tau(\varphi)+ 
\nabla \dif \varphi(V, \nabla_{e_i} e_i)+ 
\dif \varphi(\nabla_{V}\nabla_{e_i} e_i)\\
&-\nabla^{\varphi}_{e_i}\nabla \dif \varphi(V, e_i)
- \nabla \dif \varphi(e_i, \nabla_V e_i) 
- \dif \varphi(\nabla_{e_i} \nabla_{V} e_i)\\
&-\nabla \dif \varphi([V, e_i], e_i)
- \dif \varphi\left(\nabla_{[V, e_i]}e_i \right).
\end{align*}
Summing these terms yields
\begin{equation*}
\begin{split}
\big(\tr (\nabla^\varphi)^2+\Ric^{\varphi}\big)(\dif \varphi(V)) = & \, \dif \varphi\big((\tr \nabla^2 + \Ric)(V)\big) \\
& + \nabla_{V}^{\varphi}\tau(\varphi) + 2 \nabla \dif \varphi(e_i, \nabla_{e_i}V)-2 \nabla \dif \varphi(e_i, \nabla_{V}e_i),
\end{split}
\end{equation*}
but
$$
\nabla \dif \varphi(e_i, \nabla_V e_i)= \sum_{1\leq i<j \leq m}\big(g(\nabla_V e_j, e_i ) + g(\nabla_V e_i, e_j)\big) \nabla \dif \varphi(e_i, e_j)=0,
$$
by symmetry of the second fundamental form, and the result follows.
\end{proof}

\begin{te}\label{theo}
Let $(M, J, g)$ be an almost Hermitian manifold such that the vector field $\sigma = J\di J$ satisfies 
\begin{equation}\label{bihol}
g (\nabla_{Z}\sigma, W) + g (\nabla_{W}\sigma, Z) = g (\sigma, Z) g(\sigma, W),
\end{equation}
for all $Z$ and $W$ in $T^{\prime}M$, and
\begin{equation}\label{bihol'}
\left(\tr \nabla^2 \sigma + \Ric \sigma -\nabla_{\sigma}\sigma
- 2\left(\nabla_{Z_j}(\nabla_{\ov Z_j}\sigma)^{\prime \prime}
+\nabla_{(\nabla_{Z_j}\sigma)^{\prime}}\ov Z_j - \nabla_{\sigma^{\prime}}\sigma^{\prime \prime}\right)\right)^{\prime} = 0,
\end{equation}
where $\{ Z_{j}, Z_{\ov\jmath}\}_{j=1,2,...,n}$ is a Hermitian frame.

Then any holomorphic map from $(M, J, g)$ into a $(1,2)$-symplectic almost Hermitian manifold is biharmonic.
\end{te}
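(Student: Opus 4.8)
The plan is to reduce everything to Proposition \ref{chain}. The starting point, essentially Lichnerowicz's computation, is that a holomorphic map $\varphi$ into a $(1,2)$-symplectic target has tension field equal to the image of the (scaled) Lee vector field, $\tau(\varphi)=\dif\varphi(\sigma)$ with $\sigma=J\di J$ (the overall sign being fixed by the chosen convention for $\di J$). Granting this, I would apply Proposition \ref{chain} with $V=\sigma$, giving
\begin{equation*}
\tau_2(\varphi)=\dif\varphi\big((\tr\nabla^2+\Ric)\sigma\big)+\nabla^{\varphi}_{\sigma}\tau(\varphi)+2\,\mathrm{trace}\,\nabla\dif\varphi(\cdot,\nabla_{\cdot}\sigma),
\end{equation*}
and expand the middle term as $\nabla^{\varphi}_{\sigma}\tau(\varphi)=\nabla\dif\varphi(\sigma,\sigma)+\dif\varphi(\nabla_{\sigma}\sigma)$. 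The whole problem is then to show that the second-fundamental-form contributions $\nabla\dif\varphi(\sigma,\sigma)$ and $2\,\mathrm{trace}\,\nabla\dif\varphi(\cdot,\nabla_{\cdot}\sigma)$ reorganize, under \eqref{bihol}--\eqref{bihol'}, into $\dif\varphi$ of the zero vector field.

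The key tool I would establish next is the behaviour of $\nabla\dif\varphi$ on the complexified frame. Differentiating the holomorphicity relation $\dif\varphi\circ J=J^{N}\circ\dif\varphi$ yields, for all $X,Y$,
\begin{equation*}
\nabla\dif\varphi(X,JY)-J^{N}\nabla\dif\varphi(X,Y)=(\nabla^{N}_{\dif\varphi(X)}J^{N})\dif\varphi(Y)-\dif\varphi((\nabla_{X}J)Y).
\end{equation*}
Evaluating this on a Hermitian frame and invoking the $(1,2)$-symplectic condition on $N$ — which amounts to $\nabla^{N}_{\ov A}B\in T^{\prime}N$ for $A,B\in T^{\prime}N$, so that the $(\nabla^{N}J^{N})$ term drops out of the mixed components — I expect to find the dichotomy that governs the proof: the mixed values $\nabla\dif\varphi(Z,\ov W)$, for $Z,W\in T^{\prime}M$, are purely tangential, of the form $\dif\varphi$ applied to explicit fields built from $\nabla_{Z}\ov W$ and $\nabla_{\ov W}Z$, whereas the pure components $\nabla\dif\varphi(Z,W)$ retain a genuinely transverse piece (the holomorphic second fundamental form) lying outside the image of $\dif\varphi$.

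With this dichotomy in hand I would substitute into $\tau_2(\varphi)$ and sort the terms by type in the Hermitian frame. Writing $\sigma^{\prime}=\sum_{k}g(\sigma,\ov Z_{k})Z_{k}$ and $(\nabla_{\ov Z_{j}}\sigma)^{\prime}=\sum_{k}g(\nabla_{\ov Z_{j}}\sigma,\ov Z_{k})Z_{k}$, the transverse $(2,0)$ and $(0,2)$ pieces collect into a single symmetric expression contracted against the holomorphic second fundamental form, whose intrinsic coefficient is exactly $g(\nabla_{Z}\sigma,W)+g(\nabla_{W}\sigma,Z)-g(\sigma,Z)g(\sigma,W)$; thus condition \eqref{bihol} forces the transverse part to vanish \emph{for every} holomorphic $\varphi$. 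What survives is of the form $\dif\varphi(\xi)$ for a real vector field $\xi$ on $M$, and unpacking the tangential parts of the mixed terms through the identity above produces precisely the trace combination $\nabla_{Z_{j}}(\nabla_{\ov Z_{j}}\sigma)^{\prime\prime}+\nabla_{(\nabla_{Z_{j}}\sigma)^{\prime}}\ov Z_{j}-\nabla_{\sigma^{\prime}}\sigma^{\prime\prime}$. Assembling all tangential contributions gives $\xi=(\tr\nabla^{2}+\Ric)\sigma-\nabla_{\sigma}\sigma-2(\cdots)$, and since $\xi$ is real, $\xi=0$ is equivalent to the vanishing of its $T^{\prime}M$-projection, which is exactly \eqref{bihol'}. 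Hence $\tau_2(\varphi)=\dif\varphi(\xi)=0$.

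The main obstacle is the bookkeeping in this last step: carrying the non-parallel Hermitian frame through the second covariant derivatives, keeping the $T^{\prime}/T^{\prime\prime}$ projections straight, and pinning down the sign convention for $\sigma$ so that \eqref{bihol} emerges with the correct sign from the transverse part while \eqref{bihol'} emerges with the correct sign from the tangential part. A secondary point requiring care is the clean derivation of both $\tau(\varphi)=\dif\varphi(\sigma)$ and the mixed-component formula, as each relies on the precise form of the $(1,2)$-symplectic condition rather than on full integrability of $J^{N}$.
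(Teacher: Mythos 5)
Your plan follows the paper's proof essentially verbatim: apply Proposition~\ref{chain} with $V=\sigma$ using Lichnerowicz's formula for the tension field, observe that condition \eqref{bihol} makes the coefficient of the $(2,0)$ and $(0,2)$ components of $\nabla\dif\varphi$ anti-symmetric so that their contraction with the symmetric second fundamental form vanishes, and use the $(1,2)$-symplectic hypothesis to turn the mixed components into tangential terms $\dif\varphi(\cdots)$ that \eqref{bihol'} kills. The one point to pin down is the sign: with the paper's conventions $\tau(\varphi)=-\dif\varphi(\sigma)$ (not $+\dif\varphi(\sigma)$), and this choice is exactly what makes the quadratic term in \eqref{bihol} and the signs of $-\nabla_{\sigma}\sigma$ and $-2(\cdots-\nabla_{\sigma^{\prime}}\sigma^{\prime\prime})$ in \eqref{bihol'} come out as stated.
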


\begin{proof}
Let $(M, J, g)$ be an almost Hermitian manifold and 
$(N, J^N, h)$ a $(1,2)$-symplectic almost Hermitian manifold. Let $\varphi: (M, J) \to (N, J^N)$ be a holomorphic map, then its tension field is given by (\cite{BW}) $$\displaystyle{\tau(\varphi)=-\dif\varphi(\sigma)}.$$
Let $\{ Z_{j}, Z_{\ov\jmath} \}_{j=1,2,...,n}$ be a Hermitian frame on $T^\CC M$, then, by Equation~ \eqref{ohn}, the bitension field of $\varphi$ is
\begin{align*}
\tau_2(\varphi) = & \, \dif \varphi \left( \tr \nabla^2 \sigma + \Ric \sigma - \nabla_{\sigma}\sigma \right)\\
&-\nabla \dif \varphi(\sigma, \sigma) +
2\nabla \dif \varphi(Z_j, \nabla_{Z_{\ov \jmath}}\sigma)+2\nabla \dif \varphi(Z_{\ov \jmath}, \nabla_{Z_j}\sigma).
\end{align*}
As the bitension field is a real operator, it is enough to consider the vanishing of its $\varphi^{-1}T^\prime N$-component:
\begin{align*}
[\tau_2(\varphi)]^\prime = & \, \dif \varphi \left( \tr \nabla^2 \sigma + \Ric \sigma - \nabla_{\sigma}\sigma \right)^\prime\\
&+\left(2g(\nabla_{Z_{\ov{\jmath}}}\sigma, Z_{\ov{k}})
- g(\sigma, Z_{\ov{\jmath}})g(\sigma, Z_{\ov k})\right)\nabla \dif \varphi(Z_j, Z_k)^{\prime}\\
&+\left(2g(\nabla_{Z_j}\sigma, Z_k)-g(\sigma, Z_j)g(\sigma, Z_k)\right)\nabla \dif \varphi(Z_{\ov{\jmath}}, Z_{\ov k})^{\prime}\\
&+\left(2\nabla \dif \varphi (Z_j,(\nabla_{Z_{\ov{\jmath}}}\sigma)^{\prime\prime})
+ 2\nabla \dif \varphi(Z_{\ov{\jmath}}, (\nabla_{Z_j}\sigma)^{\prime})-\nabla \dif \varphi(\sigma^{\prime}, \sigma^{\prime\prime}) \right)^{\prime},
\end{align*}
since Condition \eqref{bihol} ensures that $2g(\nabla_{\cdot}\sigma, \cdot)-g(\sigma, \cdot)g(\sigma, \cdot)$ is an anti-symmetric tensor, its inner product with the second fundamental form vanishes and the formula simplifies to
\begin{align*}
[\tau_2(\varphi)]^\prime = & \, \dif \varphi \left( \tr \nabla^2 \sigma + \Ric \sigma - \nabla_{\sigma}\sigma \right)^\prime\\
&+\left(2\nabla \dif \varphi (Z_j,(\nabla_{Z_{\ov{\jmath}}}\sigma)^{\prime\prime})
+ 2\nabla \dif \varphi(Z_{\ov{\jmath}}, (\nabla_{Z_j}\sigma)^{\prime})-\nabla \dif \varphi(\sigma^{\prime}, \sigma^{\prime\prime}) \right)^{\prime}.
\end{align*}

Now, since $(N, J^N, h)$ is $(1,2)$-symplectic and $\varphi$ holomorphic, $\nabla^{\varphi}_{Z}\dif \varphi(T^{\prime\prime}M)$ is in $\varphi^{-1}T^{\prime\prime}N$ for all $Z$ in $T^{\prime}M$, so
\begin{align*}
[\tau_2(\varphi)]^\prime = & \, \dif \varphi \left( \tr \nabla^2 \sigma + \Ric \sigma - \nabla_{\sigma}\sigma \right)^\prime\\
&+ \left(- 2 \dif \varphi(\nabla_{Z_j} (\nabla_{Z_{\ov{\jmath}}}\sigma)^{\prime \prime})
- 2 \dif \varphi(\nabla_{(\nabla_{Z_j}\sigma)^{\prime}} Z_{\ov{\jmath}})+ \dif \varphi(\nabla_{\sigma^{\prime}} \sigma^{\prime \prime}) \right)^{\prime}.
\end{align*} 
Then Condition \eqref{bihol'} forces $\varphi$ to be biharmonic. 
\end{proof}

\begin{co}\label{real}
On the real tangent bundle, Conditions \eqref{bihol} and \eqref{bihol'} of Theorem \ref{theo} can be rewritten
\begin{equation}\label{biholr}
(\mathcal{L}_\sigma g)(X, Y)- (\mathcal{L}_\sigma g)(JX, JY)
=\sigma^{\flat}(X)\sigma^{\flat}(Y)-\sigma^{\flat}(JX)\sigma^{\flat}(JY),
\end{equation}
where $\mathcal{L}$ is the Lie derivative, and
\begin{equation}\label{biholr'}
\begin{split}
&\tr \nabla^2 \sigma + \Ric \sigma -\nabla_{\sigma}\sigma + \tfrac{1}{2}J\left(\left(\nabla_{\sigma} J\right)(\sigma) + \left(\nabla_{J\sigma} J\right)(J\sigma)\right)\\
&+\tfrac{1}{2}\tr \left(\left(\nabla_{\cdot} J \right) \circ \DD^{\sigma}(\cdot) -\left(\nabla_{\DD^{\sigma}(\cdot)} J\right)(\cdot)\right)=0.
\end{split}
\end{equation}
Here $\DD^{\sigma}$ is the $J$-invariant $(1,1)$-tensor field defined by $\DD^{\sigma}(X) =
\nabla_{JX}\sigma + J\nabla_{X}\sigma$.
\end{co}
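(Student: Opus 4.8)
The plan is to translate each complexified identity into a real tensorial one by evaluating it on a Hermitian frame of the form $Z = X'$, $W = Y'$, where $X' = \tfrac12(X - \ii JX)$ is the $(1,0)$-part of a real vector $X$, and then separating real and imaginary parts. Throughout I would work with the projectors $P' = \tfrac12(\mathrm{Id} - \ii J)$ and $P'' = \tfrac12(\mathrm{Id} + \ii J)$ onto $T'M$ and $T''M$, together with the Hermiticity $g(JX, JY) = g(X,Y)$, the isotropy $g(X', Y') = 0$, and the formula $(\mathcal{L}_\sigma g)(X,Y) = g(\nabla_X\sigma, Y) + g(\nabla_Y\sigma, X)$.

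For the first condition I would evaluate \eqref{bihol} on $Z = X'$, $W = Y'$. Expanding $g(\nabla_{X'}\sigma, Y') + g(\nabla_{Y'}\sigma, X') - g(\sigma, X')g(\sigma, Y')$ and using $g(\sigma, X') = \tfrac12(\sigma^\flat(X) - \ii\,\sigma^\flat(JX))$, the real part is exactly \eqref{biholr}, while the imaginary part is \eqref{biholr} with $Y$ replaced by $JY$ (invoking $J^2 = -\mathrm{Id}$). Since \eqref{biholr} is demanded for all real $X,Y$, the two parts together are equivalent to it; this step is routine.

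For the second condition the key device is that, for a complex vector field $U$, the vanishing of its $(1,0)$-part $U'$ is equivalent to $\mathrm{Re}\,U + J\,\mathrm{Im}\,U = 0$; I would apply the real operator $\rho(U) := \mathrm{Re}\,U + J\,\mathrm{Im}\,U = 2\,\mathrm{Re}(U')$. Two facts drive the computation: $\rho$ annihilates every $(0,1)$-vector (anything of the form $W''$), and, writing $\sigma = \sigma' + \sigma''$, one finds $(\nabla_{Z_j}\sigma)' = -\tfrac{\ii}{\sqrt2}(\DD^{\sigma}(e_j))'$, which is precisely how the tensor $\DD^{\sigma}$ enters. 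Expanding the three frame terms of \eqref{bihol'}, the Leibniz rule applied across $P''$ and across the frame vectors $Je_j$ produces the derivatives $\nabla J$. Because $\rho$ kills the purely second-order contribution $\sum_j(\nabla_{Z_j}\nabla_{\ov Z_j}\sigma)''$, the Laplacian term survives only through the real block $\tr\nabla^2\sigma + \Ric\sigma - \nabla_\sigma\sigma$. Applying $\rho$ to the surviving first-order pieces, I expect $\rho\big(-2\nabla_{Z_j}(\nabla_{\ov Z_j}\sigma)''\big)$ to yield $\tfrac12\tr((\nabla_\cdot J)\circ\DD^{\sigma}(\cdot))$, $\rho\big(-2\nabla_{(\nabla_{Z_j}\sigma)'}\ov Z_j\big)$ to yield $-\tfrac12\tr((\nabla_{\DD^{\sigma}(\cdot)}J)(\cdot))$, and $\rho(2\nabla_{\sigma'}\sigma'')$ to yield $\tfrac12 J((\nabla_\sigma J)\sigma + (\nabla_{J\sigma}J)(J\sigma))$, which assembled give \eqref{biholr'}.

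The hard part is the bookkeeping in matching the Hermitian-frame sums with the real traces of \eqref{biholr'}. This requires converting $\sum_j$ over $\{Z_j, \ov Z_j\}$ into a sum over the real frame $\{e_j, Je_j\}$, and then repeatedly using the $J$-invariance of $\DD^{\sigma}$ (so that $J\DD^{\sigma}(e_j) = \DD^{\sigma}(Je_j)$) together with the anticommutation $(\nabla_X J)\circ J = -J\circ(\nabla_X J)$, obtained by differentiating $J^2 = -\mathrm{Id}$. These two identities are exactly what turn the $J$-twisted summands into the untwisted traces in \eqref{biholr'}, and handling them carefully is the only genuine obstacle; the term $\nabla_{\sigma'}\sigma''$ is treated directly, noting in particular that $\nabla_{J\sigma}(J\sigma) = (\nabla_{J\sigma}J)\sigma + J\nabla_{J\sigma}\sigma$.
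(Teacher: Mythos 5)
Your proposal is correct and follows essentially the same route as the paper: both hinge on the identity expressing $(\nabla_{Z_{\ov\jmath}}\sigma)^{\prime\prime}$ (equivalently $(\nabla_{Z_j}\sigma)^{\prime}$) in terms of $\DD^{\sigma}(e_j)$, followed by realification using the $J$-invariance of $\DD^{\sigma}$ and the anticommutation $(\nabla_X J)\circ J=-J\circ(\nabla_X J)$; your operator $\rho=\mathrm{Re}+J\,\mathrm{Im}$ is just a systematic packaging of the paper's extraction of the $(1,0)$-part, and your term-by-term identifications (including $\rho(2\nabla_{\sigma'}\sigma'')=\tfrac12 J((\nabla_\sigma J)\sigma+(\nabla_{J\sigma}J)(J\sigma))$) check out.
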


\begin{proof}
Let $X$, $Y$ be vectors tangent to $M$, consider their $T^{\prime}M$-components $Z=X-\ii JX$ and $W=Y-\ii JY$ and inject them in \eqref{bihol}, then both the real and imaginary parts turn out to be equivalent to \eqref{biholr}.

To prove the second statement, first check that
\begin{equation*}
(\nabla_{Z_{\ov{\jmath}}}\sigma)^{\prime \prime}=\tfrac{1}{2\sqrt{2}}
\left(-J\DD^{\sigma}(e_j) + \ii \DD^{\sigma}(e_j)\right),
\end{equation*}
with the corresponding conjugate expression for $(\nabla_{Z_j}\sigma)^{\prime}$, then
\begin{align*}
&\nabla_{Z_j}(\nabla_{Z_{\ov{\jmath}}}\sigma)^{\prime \prime}
+\nabla_{(\nabla_{Z_j}\sigma)^{\prime}}Z_{\ov{\jmath}} =(1/4)
( \nabla_{Je_j}\DD^{\sigma}(e_j)+\nabla_{\DD^{\sigma}(e_j)}Je_j\\
&-\nabla_{e_j}\DD^{\sigma}(Je_j)-\nabla_{\DD^{\sigma}(Je_j)} e_j + \ii \left([e_j, \DD^{\sigma}(e_j)]+[Je_j, \DD^{\sigma}(Je_j)]\right)).
\end{align*}
The remaining terms in \eqref{bihol'} are easily transformed, yielding Equation \eqref{biholr'}.
\end{proof}

\section{Biharmonic holomorphic maps on l.c.K. manifolds}

In this paragraph, we exploit the conditions of Theorem \ref{theo} on locally conformally K\"ahler manifolds. This class of complex manifolds provides a quite suitable framework for our problem on at least two accounts: first by mere definition their Lee vector fields do not vanish and therefore we steer clear of the Lichnerowicz theorem and harmonic maps; second the cumbersome equations of Theorem \ref{theo} greatly simplify to take on a strong geometric flavour (Proposition \ref{lck}).

This adequacy is even more palpable in Proposition \ref{gck}, when the equations are localized on conformally K\"ahler open subsets and transformed in terms of the conformal factor. Note that, as one should expect, we encounter one the equations of \cite{BK}, since the identity map will always be holomorphic.

While biharmonicity in dimension four does not enjoy the conformal invariance of harmonic maps on surfaces, it has often been observed to possess some distinct features, as illustrated by biharmonic morphisms or the stress-energy tensor. This phenomenon occurs again here in Proposition \ref{star}, where our equations in dimension four reformulate into purely geometric conditions on the scalar and Ricci curvatures.

\begin{de} $($\cite{DO}$)$
A Hermitian manifold $(M, g, J)$ is called \emph{locally conformal K\"ahler} (l.c.K.) if each point admits a neighbourhood on which the metric is conformal to a K\"ahler metric. 
This is equivalent to the existence of a (globally defined) closed one-form $\theta$, the Lee form, such that
$$
\dif \Omega = \theta \wedge \Omega, 
$$
where $\Omega(X, Y) = g(X,JY)$ is the K\"ahler form. If $\theta$ is moreover exact, the manifold is called \emph{globally conformal K\"ahler} (g.c.K.).

\noindent If $\dim_{\RR}M > 2$, then for all vectors $X$ and $Y$
\begin{equation}\label{naj}
(\nabla_X J)(Y) =\tfrac{1}{2}\left(\theta(JY)X - \theta(Y)JX + g(X, Y)JB - \Omega(X, Y )B\right) ,
\end{equation}
where the Lee vector field $B=\tfrac{2}{2 - \dim M}J \di J$ is the dual of $\theta$.
\end{de}

\begin{pr}\label{lck}
Let $(M^m, g, J)$ be an l.c.K. manifold. If its Lee vector field $B$ and Lee form $\theta$ satisfy
\begin{equation}\label{biholck}
g(\nabla_{X}B, Y)- g(\nabla_{JX}B, JY)
=\tfrac{2-m}{4}\big(\theta(X)\theta(Y)-\theta(JX)\theta(JY)\big),
\end{equation}
for all vectors $X$, $Y$, and 
\begin{equation}\label{biholck'}
\tr \nabla^2 B + \Ric B + \tfrac{m-6}{2}\nabla_{B}B + \left(\di B - \tfrac{m-2}{4}\abs{B}^2 \right) B =0,
\end{equation}
then any holomorphic map from $(M,J,g)$ into a $(1,2)$-symplectic almost Hermitian manifold is biharmonic.
\end{pr}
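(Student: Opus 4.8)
The plan is to read Proposition~\ref{lck} as the specialization of Theorem~\ref{theo} to the l.c.K.\ setting, where the field governing everything is $\sigma = J\di J = \tfrac{2-m}{2}B$. Writing $c=\tfrac{2-m}{2}$ (nonzero because $\dim_\RR M>2$), I would show that the two hypotheses \eqref{biholck} and \eqref{biholck'} are exactly the real conditions \eqref{biholr} and \eqref{biholr'} of Corollary~\ref{real} once $\sigma=cB$ is substituted and the l.c.K.\ structure is fed in; Theorem~\ref{theo} then delivers biharmonicity. Two structural facts drive every simplification: first, since $\theta$ is closed, $\nabla B$ is a \emph{symmetric} endomorphism, $g(\nabla_X B, Y)=g(\nabla_Y B, X)$; second, the pointwise identity \eqref{naj} for $(\nabla_X J)(Y)$, used together with $\theta(JB)=g(B,JB)=0$, $\theta(B)=\abs{B}^2$ and $\Omega(X,B)=-\theta(JX)$.

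For the first condition I would substitute $\sigma=cB$ into \eqref{biholr}. Symmetry of $\nabla B$ collapses each Lie derivative, $(\mathcal{L}_\sigma g)(X,Y)=2c\,g(\nabla_X B,Y)$, so \eqref{biholr} reads $2c\big(g(\nabla_X B,Y)-g(\nabla_{JX}B,JY)\big)=c^2\big(\theta(X)\theta(Y)-\theta(JX)\theta(JY)\big)$; dividing by $2c$ and using $\tfrac{c}{2}=\tfrac{2-m}{4}$ gives precisely \eqref{biholck}.

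For the second condition, substituting $\sigma=cB$ into \eqref{biholr'} scales the linear terms by $c$ and the quadratic ones by $c^2$. The middle term vanishes: for any $V$, \eqref{naj} yields $(\nabla_V J)(V)=\tfrac12\big(\theta(JV)V-\theta(V)JV+\abs{V}^2 JB\big)$ (the $\Omega(V,V)$ term being zero), and evaluating at $V=\sigma$ and $V=J\sigma$, where $\theta(J\sigma)=c\theta(JB)=0$, makes the two summands cancel, so $(\nabla_\sigma J)(\sigma)=(\nabla_{J\sigma}J)(J\sigma)=0$. It then remains to evaluate the trace term. With $\DD^\sigma=c\DD^B$ and $\DD^B(X)=\nabla_{JX}B+J\nabla_X B$, the symmetry of $\nabla B$ forces $\DD^B$ to be trace-free and skew-adjoint. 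Expanding $\tfrac12\tr\big((\nabla_\cdot J)\circ\DD^B(\cdot)-(\nabla_{\DD^B(\cdot)}J)(\cdot)\big)$ through \eqref{naj} and summing over a frame $\{e_i\}$ and its image $\{Je_i\}$, the $JB$-valued contributions cancel by trace-freeness and the remainder reduces to $-\nabla_B B+J\nabla_{JB}B+(\di B)B$. Finally, the first condition \eqref{biholck} taken at $X=B$ gives $J\nabla_{JB}B=-\nabla_B B+\tfrac{2-m}{4}\abs{B}^2 B$, so the trace term equals $-2\nabla_B B+\big(\di B-\tfrac{m-2}{4}\abs{B}^2\big)B$. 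Collecting this with the rescaled $\tr\nabla^2 B$, $\Ric B$ and $-c\,\nabla_B B=\tfrac{m-2}{2}\nabla_B B$ terms, and using $\tfrac{m-2}{2}-2=\tfrac{m-6}{2}$, reproduces \eqref{biholck'}.

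The main obstacle is the trace-term bookkeeping: correctly assembling the four contributions from \eqref{naj} over both $\{e_i\}$ and $\{Je_i\}$, verifying $\tr\DD^B=0$ and skew-adjointness of $\DD^B$ from the symmetry of $\nabla B$, and—most tellingly—recognizing that the residual term $J\nabla_{JB}B$ cannot be eliminated through \eqref{naj} alone but only by invoking the first condition \eqref{biholck}. This is what forces the two hypotheses to be used jointly rather than separately, and it is precisely where the coefficients $\tfrac{m-6}{2}$ and $\tfrac{m-2}{4}$ of \eqref{biholck'} get pinned down.
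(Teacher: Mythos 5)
Your proposal is correct and follows essentially the same route as the paper: specialize Corollary~\ref{real} with $\sigma=\tfrac{2-m}{2}B$, use closedness of $\theta$ (symmetry of $\nabla B$) for the first condition, use \eqref{naj} to kill $(\nabla_\sigma J)(\sigma)$ and evaluate the trace term, and finally invoke \eqref{biholck} at $X=B$ to eliminate the residual $J\nabla_{JB}B$. All your intermediate identities (the reduction of the trace term to $-\nabla_B B+J\nabla_{JB}B+(\di B)B$ and the resulting coefficients) agree with the paper's computation.
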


\begin{proof}
Let $(M, J, g)$ be an l.c.K. manifold and $(N, J^N, h)$ a (1,2)-symplectic almost Hermitian manifold. Take $\{e_i, Je_i\}_{i=1,...,n}$ an orthonormal frame on $M$. Assume that $\varphi: (M, J) \to (N, J^N)$ is a holomorphic map, then
$$\tau(\varphi)=\tfrac{m-2}{2}\dif \varphi(B).$$
As $\theta$ is closed, Condition \eqref{biholr} immediately translates into \eqref{biholck}.

\noindent Moreover, from \eqref{naj} we infer that $(\nabla_B J)(B) = (\nabla_{JB} J)(JB) = 0$, so Equation \eqref{biholr'} becomes:
\begin{equation}\label{2lck}
\tr \nabla^2 B + \Ric B +\tfrac{m-2}{2}\nabla_{B}B + 
\left(\nabla_{e_i} J\right)(\DD^{B}(e_{i}))
-\left(\nabla_{\DD^{B}(e_{i})} J\right)(e_i)=0,
\end{equation}
since $\DD^{B}$ is $J$-invariant and $(\nabla_X J)(Y) = (\nabla_{JX} J)(JY)$ ($J$ being integrable).

\noindent Using Equation \eqref{naj} we obtain
\begin{equation*}
\begin{split}
&2\left(\left(\nabla_{e_i} J\right)(\DD^{B}(e_{i}))
-\left(\nabla_{\DD^{B}(e_{i})} J\right)(e_i)\right)\\
&=g\left(J\nabla_{Je_i}B - \nabla_{e_i}B, B\right)e_i
+g(B, e_i)\left(J\nabla_{Je_i}B - \nabla_{e_i}B \right)\\
&-g\left(\nabla_{Je_i}B + J\nabla_{e_i}B, B\right)Je_i
-g(B, Je_i)\left(\nabla_{Je_i}B + J\nabla_{e_i}B \right)\\
&-2g\left(J\nabla_{Je_i}B - \nabla_{e_i}B, e_i\right)B\\
&=g\left(J\nabla_{JB}B, e_i\right)e_i- \tfrac{1}{2}e_i(\abs{B}^2)e_i
+J\nabla_{JB}B - \nabla_{B}B\\
&- \tfrac{1}{2}(Je_i)(\abs{B}^2)Je_i + 
g\left(J\nabla_{JB}B, Je_i\right)Je_i + 2(\di B)B\\
&=2J\nabla_{JB}B - \tfrac{1}{2}\gr \abs{B}^2 - \nabla_{B}B + 2(\di B)B,
\end{split}
\end{equation*}
so that Equation \eqref{2lck} becomes
\begin{equation}\label{2lck+}
\tr \nabla^2 B + \Ric B +\tfrac{m-3}{2}\nabla_{B}B + 
\nabla_{JB}JB + (\di B)B - \tfrac{1}{4}\gr \abs{B}^2 =0.
\end{equation}
Choosing $X=B$ in \eqref{biholck} yields
$$
\nabla_{B}B + \nabla_{JB}JB = \tfrac{2-m}{4}\abs{B}^2 B,
$$ 
simplifying \eqref{2lck+} into Equation \eqref{biholck'}.
\end{proof}

\begin{pr}\label{gck}
Let $(M^m, g, J)$ be a K\"ahler manifold and $\gamma$ a smooth function on $M$ such that $J (e^{\frac{m-6}{2}\gamma}\gr\gamma)$ is a Killing vector field. If
\begin{equation}\label{biholgk}
\Delta \gamma + \tfrac{m-2}{2}\abs{\gr \gamma}^2 = 0,
\end{equation}
then any holomorphic map from $(M, e^{2\gamma}g, J)$ into a $(1,2)$-symplectic almost Hermitian manifold is biharmonic.
\end{pr}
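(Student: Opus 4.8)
The plan is to recognise $(M,\tilde g:=e^{2\gamma}g,J)$ as a globally conformally K\"ahler manifold, hence l.c.K., and to verify the two hypotheses \eqref{biholck} and \eqref{biholck'} of Proposition \ref{lck}. Since $g$ is K\"ahler, the fundamental form of $\tilde g$ is $\tilde\Omega=e^{2\gamma}\Omega$, so $\dif\tilde\Omega=2\,\dif\gamma\wedge\tilde\Omega$ and the Lee form is the \emph{exact} one-form $\theta=2\,\dif\gamma$, with Lee vector field $B=2e^{-2\gamma}\gr\gamma$ (the $\tilde g$-gradient of $2\gamma$). Throughout, operators without a tilde ($\nabla$, $\Hess$, $\gr$, $\Ric$, $\Delta$, $\di$, $\abs{\cdot}$) refer to the K\"ahler metric $g$. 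Everything that follows is bookkeeping to push \eqref{biholck} and \eqref{biholck'} through the conformal change, for which I would use the standard identity $\tilde\nabla_X Y=\nabla_X Y+\dif\gamma(X)Y+\dif\gamma(Y)X-g(X,Y)\gr\gamma$ and its consequences for $\widetilde{\Hess}$, $\widetilde{\Ric}$ and the rough Laplacian.

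For \eqref{biholck}: since $\theta$ is closed, the left-hand side is symmetric and equals $\tfrac12(\mathcal{L}_B\tilde g)(X,Y)-\tfrac12(\mathcal{L}_B\tilde g)(JX,JY)$; as $B$ is twice the $\tilde g$-gradient of $\gamma$ one has $\mathcal{L}_B\tilde g=4\widetilde{\Hess}\gamma$, and the conformal Hessian formula $\widetilde{\Hess}\gamma=\Hess\gamma-2\,\dif\gamma\otimes\dif\gamma+\abs{\gr\gamma}^2g$, together with the $J$-invariance of $g$, reduces \eqref{biholck} to the single tensorial requirement that $S:=\Hess\gamma+\tfrac{m-6}{2}\,\dif\gamma\otimes\dif\gamma$ be $J$-invariant, i.e. $S(X,Y)=S(JX,JY)$. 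Finally, using $\nabla J=0$, a short computation gives $(\mathcal{L}_W g)(X,Y)=-e^{\frac{m-6}{2}\gamma}\big(S(X,JY)+S(JX,Y)\big)$ for $W=J(e^{\frac{m-6}{2}\gamma}\gr\gamma)$, and the $J$-invariance of the symmetric form $S$ is exactly equivalent to $\mathcal{L}_W g=0$. Hence \eqref{biholck} holds precisely when $J(e^{\frac{m-6}{2}\gamma}\gr\gamma)$ is Killing.

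For \eqref{biholck'}: I would substitute $B=2e^{-2\gamma}\gr\gamma$ and rewrite each term — the rough Laplacian $\tr\tilde\nabla^2 B$, the curvature term $\widetilde{\Ric}\,B$, $\tilde\nabla_B B$, $\tilde\di B$, and $\abs{B}^2_{\tilde g}=4e^{-2\gamma}\abs{\gr\gamma}^2$ — in terms of $g$, $\gamma$ and their $g$-covariant derivatives. Using $\nabla J=0$ and, crucially, the already-established identity \eqref{biholck} (in particular the relation obtained from $X=B$), I expect the numerous $\gamma$-derivative terms to cancel, collapsing the vector equation \eqref{biholck'} into a combination of the scalar quantity $\Delta\gamma+\tfrac{m-2}{2}\abs{\gr\gamma}^2$ and of its gradient. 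Equation \eqref{biholgk}, assumed to hold on all of $M$, then makes this quantity and its gradient vanish, so \eqref{biholck'} is satisfied and Proposition \ref{lck} yields the biharmonicity of every holomorphic map from $(M,\tilde g,J)$ into a $(1,2)$-symplectic target.

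The main obstacle is precisely this last step: the conformal transformation of the second-order vector operator in \eqref{biholck'}, where the rough Laplacian and the Ricci term of the $\gamma$-dependent field $B$ generate a large number of terms. The delicate point will be to show — with the help of $\nabla J=0$ and the Killing/\eqref{biholck} identity — that these assemble into the single Baird--Kamissoko-type scalar condition \eqref{biholgk}, rather than into several independent equations.
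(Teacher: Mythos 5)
Your overall strategy --- view $(M,e^{2\gamma}g,J)$ as globally conformally K\"ahler with $\theta=2\,\dif\gamma$, $B=2e^{-2\gamma}\gr\gamma$, and verify the two hypotheses of Proposition \ref{lck} --- is exactly the paper's, and your treatment of \eqref{biholck} is complete and correct: it reduces to the $J$-invariance of $\Hess\gamma+\tfrac{m-6}{2}\dif\gamma\otimes\dif\gamma$ (equivalently of $\Hess\big(e^{\frac{m-6}{2}\gamma}\big)$ when $m\neq6$), which on a K\"ahler manifold is precisely the condition that $J\big(e^{\frac{m-6}{2}\gamma}\gr\gamma\big)$ be Killing; this is the paper's own argument, with the cited equivalence proved rather than quoted.

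The gap is in the second half. After the conformal bookkeeping, \eqref{biholck'} does \emph{not} collapse into ``a combination of the scalar $\Delta\gamma+\tfrac{m-2}{2}\abs{\gr\gamma}^2$ and its gradient'': it becomes the Baird--Kamissoko equation
\begin{equation*}
\tr \nabla^2 (\gr \gamma) + \Ric(\gr \gamma)+(m-6)\nabla_{\gr \gamma}\gr \gamma -2\big(\Delta \gamma + (m-4) \abs{\gr \gamma}^2 \big) \gr \gamma = 0,
\end{equation*}
whose leading part $\tr\nabla^2(\gr\gamma)+\Ric(\gr\gamma)$ is third order in $\gamma$ and cannot be eliminated by $\nabla J=0$, by the pointwise relation obtained from \eqref{biholck} at $X=B$ (that relation is consumed inside the proof of Proposition \ref{lck} itself), or by the single scalar equation \eqref{biholgk}. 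The missing ingredient is a \emph{second} use of the Killing hypothesis, now at second order: a Killing field $W$ satisfies the Bochner identity $(\tr\nabla^2+\Ric)W=0$, and since $\nabla J=0$ and $\Ric$ commutes with $J$ on a K\"ahler manifold, this transfers to $(\tr\nabla^2+\Ric)\big(e^{\frac{m-6}{2}\gamma}\gr\gamma\big)=0$. Expanding this identity and subtracting it from the displayed equation cancels the operator $\tr\nabla^2+\Ric$ and the $\nabla_{\gr\gamma}\gr\gamma$ terms simultaneously, leaving exactly $\tfrac{2-m}{2}\big(\Delta\gamma+\tfrac{m-2}{2}\abs{\gr\gamma}^2\big)\gr\gamma=0$, which is where \eqref{biholgk} finally enters. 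Without this step your argument cannot close, since \eqref{biholgk} is one scalar second-order equation and cannot by itself control a third-order vector quantity.
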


\begin{proof}
Let $(M^m, g, J)$ be a K\"ahler manifold and $\gamma$ a smooth function on $M$. Put $\tilde g = e^{2\gamma}g$, then the Lee vector field of $(M, \tilde g, J)$ is $B = 2 e^{-2\gamma} \gr \gamma$ and Equation \eqref{biholck} becomes
$$
g\left(\nabla_{X}\gr \gamma, Y \right) - g\left(\nabla_{JX}\gr \gamma, JY \right)
=\tfrac{6-m}{2}\left(X(\gamma)Y(\gamma)-JX(\gamma)JY(\gamma)\right), 
$$
or equivalently 
\begin{equation}\label{hess}
\Hess \big(e^{\tfrac{m-6}{2}\gamma}\big)(X,Y) =
\Hess\big(e^{\tfrac{m-6}{2}\gamma}\big)(JX, JY),
\end{equation}
if $m \neq 6$ and 
\begin{equation}\label{hess'}
\Hess(\gamma)(X,Y) =
\Hess(\gamma)(JX, JY),
\end{equation}
if $m = 6$, and, on a K\"ahler manifold, the Hessian of a function $F$ is $J$-invariant if and only if $J\gr F$ is a Killing vector field. 

The individual terms of Equation \eqref{biholck'} can be expressed with respect to the metric $g$:
\begin{align*}
\tr_{\tilde g} \widetilde{\nabla}^2 B&=2e^{-4\gamma}(\tr \nabla^2 (\gr \gamma) -(3\Delta \gamma  + 2(m-2)\abs{\gr \gamma}^2)\gr \gamma \\
&+ (m-2) \nabla_{\gr \gamma}\gr \gamma );\\
\Ric_{\tilde g} B &= 2e^{-4\gamma}\left(\Ric(\gr \gamma) -(\Delta \gamma)\gr \gamma - (m-2) \nabla_{\gr \gamma}\gr \gamma \right);\\
\widetilde{\nabla}_B B &=4e^{-4\gamma}\left(-\abs{\gr \gamma}^2 \, \gr \gamma + \nabla_{\gr \gamma}\gr \gamma \right);\\
(\di_{\tilde g} B)B &=4e^{-4\gamma}\left((\Delta \gamma) \gr \gamma + (m-2)\abs{\gr \gamma}^2 \, \gr \gamma \right);\\
\abs{B}_{\tilde g}^{2} B &=8e^{-4\gamma}\abs{\gr \gamma}^2 \, \gr \gamma,
\end{align*}
so Equation \eqref{biholck'} becomes:
\begin{equation}\label{bikahler}
\begin{split}
&\tr \nabla^2 (\gr \gamma) + \Ric(\gr \gamma)
+(m-6)\nabla_{\gr \gamma}\gr \gamma \\
&-2\big(\Delta \gamma + (m-4) \abs{\gr \gamma}^2 \big) \gr \gamma = 0.
\end{split}
\end{equation}
Note that this is exactly  the condition of \cite{BK} for the identity map from $(M, \tilde g)$ to $(M, g)$ to be biharmonic.

\noindent Finally, since $J \big(e^{\frac{m-6}{2}\gamma}\gr\gamma\big)$ is a Killing vector field, it satisfies
$$
(\tr\nabla^2 +\Ric) \big(e^{\tfrac{m-6}{2}\gamma}\gr \gamma\big) =0,
$$
i.e.
\begin{equation*}
\begin{split}
&\tr \nabla^2 (\gr \gamma) + \Ric(\gr \gamma)
+(m-6)\nabla_{\gr \gamma}\gr \gamma \\
&+\tfrac{m-6}{4}\left(2\Delta \gamma + (m-6) \abs{\gr \gamma}^2 \right) \gr \gamma = 0,
\end{split}
\end{equation*}
and combined with \eqref{bikahler} this yields \eqref{biholgk}.
\end{proof}

\begin{re}\label{glck}
Since each point of an l.c.K. manifold admits a conformally K\"ahler neighbourhood, Proposition \ref{gck} extends to l.c.K. manifolds, either considering a local version of the hypothesis or rewriting them in terms of the (global) Lee form $\theta$:
\begin{equation}\label{biholck+}
\nabla\theta + \tfrac{m-2}{4}\theta \otimes \theta \ \  \text{is J-invariant},
\end{equation}
and
\begin{equation}\label{biholck+'}
\delta \theta + \tfrac{m-2}{4}\abs{\theta}^2 = 0.
\end{equation}
Notice that Condition \eqref{biholck+'} cannot be satisfied on l.c.K. manifolds with automorphic potential or on Vaisman manifolds (see \cite{orn3}).
\end{re}

\begin{de}
The $*$-Ricci tensor and the $*$-scalar curvature of a Hermitian manifold $(M, J, g)$ are defined by
$$
\Ric^*(X, Y) = \tr \big(Z \mapsto R(X, JZ)JY \big),
$$
and $s^* = \tr \Ric^*$.
\end{de}

\begin{pr}\label{star}
Let $(M, g, J)$ be a $4$-dimensional l.c.K. manifold with $J$-invariant Ricci tensor and equal scalar and $*$-scalar curvatures. Then any holomorphic map from $(M,g,J)$ into a $(1,2)$-symplectic almost Hermitian manifold is biharmonic.
\end{pr}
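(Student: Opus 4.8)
The plan is to reduce the statement to Proposition \ref{gck} by exploiting the local conformal Kähler structure, matching the two curvature hypotheses ($J$-invariance of $\Ric$ and $s=s^*$) to the two hypotheses of that proposition. Since biharmonicity is a pointwise (local) condition and every l.c.K. manifold is covered by open sets on which the metric is conformal to a Kähler one, it suffices to argue on such a neighbourhood $U$, where $g = e^{2\gamma}g_0$ with $g_0$ Kähler and $m=4$; there the Lee field is $B = 2e^{-2\gamma}\gr_0\gamma$, as in the proof of Proposition \ref{gck}. I would therefore show that, in dimension four, the $J$-invariance of $\Ric$ is equivalent to the Killing hypothesis of Proposition \ref{gck}, and that $s=s^*$ is equivalent to its scalar condition \eqref{biholgk}.

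For the first hypothesis I would insert the conformal transformation law of the Ricci tensor, which for $m=4$ reads $\Ric = \Ric_0 - 2(\Hess_0\gamma - d\gamma\otimes d\gamma) - (\Delta_0\gamma + 2|\gr_0\gamma|^2)g_0$. As $\Ric_0$ and $g_0$ are $J$-invariant (the base being Kähler), the whole $J$-anti-invariant part of $\Ric$ is carried by $\Hess_0\gamma - d\gamma\otimes d\gamma = -e^{\gamma}\Hess_0(e^{-\gamma})$; hence $\Ric$ is $J$-invariant precisely when $\Hess_0(e^{-\gamma})$ is $J$-invariant. By the Kähler fact recalled in the proof of Proposition \ref{gck} (the Hessian of a function is $J$-invariant iff $J$ times its gradient is Killing), this is exactly the requirement that $J(e^{-\gamma}\gr_0\gamma)$ be a Killing field, i.e. the hypothesis of Proposition \ref{gck} for $m=4$; equivalently it recovers \eqref{biholck} through \eqref{hess}.

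For the second hypothesis the plan is to compute both scalar curvatures under the conformal change. The ordinary one obeys $s = e^{-2\gamma}(s_0 - 6\Delta_0\gamma - 6|\gr_0\gamma|^2)$, while for $s^*$ I would substitute the conformal variation of the full $(1,3)$ curvature tensor into the definition $\Ric^*(X,Y) = \tr(Z\mapsto R(X,JZ)JY)$ and trace over $Z$; using $\Ric^*_0 = \Ric_0$ on the Kähler base, this should give $\Ric^* = \Ric_0 - T - T(J\cdot,J\cdot)$ with $T$ the (symmetric) Schouten-type tensor of the conformal factor, whence $s^* = e^{-2\gamma}(s_0 - 2\Delta_0\gamma - 2|\gr_0\gamma|^2)$. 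Subtracting yields $s - s^* = -4e^{-2\gamma}(\Delta_0\gamma + |\gr_0\gamma|^2)$, so that $s=s^*$ is equivalent to $\Delta_0\gamma + |\gr_0\gamma|^2 = 0$, which is precisely \eqref{biholgk} for $m=4$. With both hypotheses of Proposition \ref{gck} verified on $U$, every holomorphic map is biharmonic there, and locality of biharmonicity propagates the conclusion to all of $M$.

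The main obstacle I anticipate is the conformal transformation law for the $*$-scalar curvature: unlike $\Ric$ and $s$, there is no off-the-shelf formula for $\Ric^*$ under a conformal change, so the heart of the argument is the term-by-term trace of the conformally varied curvature against $J$. The favourable cancellations there (the skew traces $\sum_a g_0(Jf_a,f_a)$ and the trace of the composition of $J$ with the symmetric Schouten tensor both vanish) are exactly what isolates the factor distinguishing $s^*$ from $s$ and makes the condition $s=s^*$ collapse onto \eqref{biholgk}.
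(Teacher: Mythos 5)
Your proposal is correct and follows the same overall strategy as the paper: localize to a conformally K\"ahler chart $g=e^{2\gamma}g_0$, match the $J$-invariance of $\Ric$ to the Killing/Hessian hypothesis via the conformal transformation law of the Ricci tensor, and match $s=s^*$ to the scalar condition. The first half is essentially identical to the paper's argument (same use of $\Ric=\Ric_0-2(\Hess_0\gamma-\dif\gamma\otimes\dif\gamma)-(\Delta_0\gamma+2\abs{\dif\gamma}_0^2)g_0$ and of the K\"ahler fact that $\Hess F$ is $J$-invariant iff $J\gr F$ is Killing). Where you diverge is the second half: the paper simply invokes Vaisman's identity $s-s^*=2\delta\theta+\abs{\theta}^2$, valid on any Hermitian surface, which for $m=4$ makes $s=s^*$ literally coincide with Condition \eqref{biholck+'}; you instead propose to re-derive the conformal transformation of $s^*$ by tracing the conformally varied curvature against $J$. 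Your stated outcome, $s-s^*=-4e^{-2\gamma}(\Delta_0\gamma+\abs{\gr_0\gamma}^2)$, is exactly what Vaisman's formula gives (using $\theta=2\dif\gamma$ and $\delta\theta=-2e^{-2\gamma}(\Delta_0\gamma+2\abs{\gr_0\gamma}_0^2)$), so the computation you outline, while more laborious and only sketched, lands in the right place; what the citation buys the paper is that the equivalence $s=s^*\Leftrightarrow\eqref{biholck+'}$ is manifestly conformally-frame-independent and requires no curvature computation, whereas your route is self-contained and makes visible why the discrepancy between $s$ and $s^*$ is governed precisely by the Schouten-type tensor of the conformal factor.
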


\begin{proof}
Let $(M^4, g, J)$ be an l.c.K. manifold with $J$-invariant Ricci tensor and $s=s^*$. Let $U$ be an open subset of $M$ on which the metric $g$ is conformal to the K\"ahler metric $g_0$, i.e. $g=e^{2\gamma}g_0$.

Then the conformal transformation law of the Ricci curvature is (\cite{Be})
$$
\Ric=\Ric_0-2(\Hess_0(\gamma) - \dif \gamma \otimes \dif \gamma)
-(\Delta_0 \gamma +2\abs{\dif \gamma}_0^2)g_0,
$$
and $\Hess_0(\gamma) - \dif \gamma \otimes \dif \gamma= -e^{\gamma}\Hess_0(e^{-\gamma})$, so if $\Ric$, $\Ric_0$ and $g_0$ are $J$-invariant, so is $\Hess_0(e^{-\gamma})$. This, in turn, is equivalent to $\Hess(e^{\gamma})$ being $J$-invariant, which is the local version of Condition \eqref{biholck+} .

Finally, the formula (\cite{Vais}) \
$\displaystyle{s - s^* = 2\delta \theta + \abs{\theta}^2}$ shows  Condition \eqref{biholck+'} to be equivalent to the equality of  $s$ and $s^*$.
\end{proof}

\section{Constructions and examples}
The main advantage of Proposition \ref{gck} is that it is tailor-made for the construction of examples (necessarily non-compact by \eqref{biholgk}). We first use it to give a recipe producing solutions to our problem from a one-dimensional foliation on a K\"ahler manifold, provided its tangent vector field is holomorphic and exact and under a condition on the eigenvalues of the pullback metric from the leaf space. Though this requirement may seem rather strong at first, it actually turns out to give a unified approach to all three examples we present.
While the first one shows that solutions can even be obtained on complex Euclidean spaces, Example \ref{cone} works for (the K\"ahler cone of) any Sasakian manifold, but in both cases a restriction on the dimension limits the solutions to complex surfaces. However, in Example \ref{inoue}, an Inoue-type construction on the product of any K\"ahler manifold and the upper half space is shown to yield examples in all dimensions.

\begin{pr}\label{subm}
Let $(M^{m}, g, J)$ be a K\"ahler manifold and $\varphi: M \to (N^{m-1}, h)$ a submersion with one-dimensional fibres spanned by a vector field $V=\gr F$, where $F$ is a positive smooth function on $M$. If $V$ is a holomorphic vector field and the eigenvalues of $\varphi^* h$ with respect to $g$ satisfy 
\begin{align*}
\prod_{i=1}^{m - 1} \lambda_i &= F^{\frac{4}{m-6}}\abs{\gr F}, \quad \text{when} \ m \neq 6;\\
\prod_{i=1}^{5} \lambda_i &= 2 F \abs{\gr F}, \quad \text{when} \ m = 6,
\end{align*}
then $\gamma=\tfrac{2}{m-6}\ln F$ (respectively $\gamma=F$), when $m \neq 6$ (respectively $m = 6$) satisfies the hypothesis of Proposition 3.
\end{pr}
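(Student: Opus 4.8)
The plan is to verify that the stated function $\gamma$ meets the two hypotheses of Proposition \ref{gck}; biharmonicity of holomorphic maps out of $(M, e^{2\gamma}g, J)$ then follows from that proposition. The exponents are chosen so that $e^{\frac{m-6}{2}\gamma} = F$ when $m \neq 6$, and a direct computation then gives $e^{\frac{m-6}{2}\gamma}\gr\gamma = \tfrac{2}{m-6}\gr F = \tfrac{2}{m-6}V$; when $m = 6$ one takes $\gamma = F$ and gets $e^{\frac{m-6}{2}\gamma}\gr\gamma = \gr F = V$. In both cases the vector field appearing in the first hypothesis is a non-zero constant multiple of $V$, which already reduces that hypothesis to a statement about $V$.

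First I would settle the Killing condition. Since $g$ is K\"ahler and $V = \gr F$, holomorphicity of $V$ means $\mathcal{L}_V J = 0$; from $\nabla J = 0$ one computes $(\mathcal{L}_V J)(X) = J\nabla_X V - \nabla_{JX}V$, and its vanishing for all $X$ is equivalent to the $J$-invariance of $\Hess F$. As recalled in the proof of Proposition \ref{gck}, on a K\"ahler manifold this is in turn equivalent to $J\gr F$ being a Killing field. Since $e^{\frac{m-6}{2}\gamma}\gr\gamma$ is a constant multiple of $V = \gr F$, its image under $J$ is a constant multiple of the Killing field $J\gr F$, so the first hypothesis of Proposition \ref{gck} holds.

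For the second hypothesis, expanding $\Delta\gamma + \tfrac{m-2}{2}\abs{\gr\gamma}^2 = 0$ with $\gamma = \tfrac{2}{m-6}\ln F$ (using $\Delta\ln F = \Delta F/F - \abs{\gr F}^2/F^2$) reduces it, after clearing denominators, to the first-order relation $(m-6)F\,\Delta F + 4\abs{\gr F}^2 = 0$, and to $\Delta F + 2\abs{\gr F}^2 = 0$ when $m = 6$. To obtain this relation I would turn to the submersion: its fibres are the integral curves of $V = \gr F$, hence orthogonal to the level sets of $F$, so the horizontal distribution is $\ker\dif F$ and $\varphi$ restricts to a local diffeomorphism of each level set onto $N$. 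Consequently the pullback of the volume form $\nu_N$ of $(N,h)$ annihilates $V$ and satisfies $\varphi^*\nu_N = \big(\prod_i\lambda_i\big)\,\iota_{V/\abs{V}}\nu_M$, the factor $\prod_i\lambda_i$ recording the Jacobian of $\varphi$ along a level set. As $\nu_N$ is closed, so is $\varphi^*\nu_N$; writing $\varphi^*\nu_N = \iota_W\nu_M$ with $W = \phi\,\gr F$ and $\phi = \prod_i\lambda_i/\abs{\gr F}$, closedness becomes $\di(\phi\,\gr F) = \gr F(\phi) + \phi\,\Delta F = 0$.

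The last step feeds in the eigenvalue hypothesis, which is engineered precisely so that the factor $\abs{\gr F}$ in $\prod_i\lambda_i$ cancels the one in $V/\abs{V}$, leaving $\phi = F^{4/(m-6)}$, a function of $F$ alone. Then $\gr F(\phi) = \phi'(F)\abs{\gr F}^2$ produces no second-order term and the closedness identity collapses to exactly the first-order relation above. The main obstacle, and the point demanding the most care, is this identification of $W$ together with the verification that the prescribed product of eigenvalues makes $\phi$ independent of $\abs{\gr F}$: any leftover dependence would inject a $\Hess F(\gr F, \gr F)$ term through $\gr F(\phi)$ that holomorphicity alone does not control, and the whole argument hinges on its absence. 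The degenerate case $m = 6$, where the logarithmic substitution breaks down and $\gamma = F$, must be run through the same argument separately with its own prescribed product.
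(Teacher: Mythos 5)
Your argument is correct for $m\neq 6$ and reaches the paper's conclusion by a genuinely different route for the key identity. The paper isolates this identity as Lemma~\ref{eigenval}, $\di V + V\bigl(\ln\tfrac{\prod_i\lambda_i}{\abs{V}}\bigr)=0$, proved by a frame computation using the formula $U(\ln\lambda_i)=g(\nabla_{E_i}E_i,U)$ quoted from \cite{LS}; you instead obtain the same relation, in the equivalent form $\di\bigl(\tfrac{\prod_i\lambda_i}{\abs{V}}\,V\bigr)=0$, from the closedness of $\varphi^{*}\nu_N=\bigl(\prod_i\lambda_i\bigr)\,\iota_{V/\abs{V}}\nu_M$ together with $d(\iota_W\nu_M)=(\di W)\nu_M$. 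Your derivation is self-contained and arguably more conceptual, whereas the paper's reuses an existing eigenvalue formula; the remaining steps (reducing \eqref{biholgk} to $(m-6)F\Delta F+4\abs{\gr F}^2=0$, and the equivalence of holomorphicity of $\gr F$ with $J\gr F$ being Killing on a K\"ahler manifold) coincide with the paper's, and your observation that the eigenvalue hypothesis is calibrated exactly so that $\phi=\prod_i\lambda_i/\abs{\gr F}$ depends on $F$ alone is the right way to see why no uncontrolled $\Hess F(\gr F,\gr F)$ term survives. One caveat: you defer the $m=6$ case rather than running it, and had you run it you would have found that the stated product $\prod_i\lambda_i=2F\abs{\gr F}$ gives $\phi=2F$ and hence $\Delta F+\abs{\gr F}^2/F=0$, which is \emph{not} the required $\Delta F+2\abs{\gr F}^2=0$; the normalization consistent with the paper's own equation $\di V+V(2F)=0$ is $\prod_i\lambda_i=e^{2F}\abs{\gr F}$, so the $m=6$ hypothesis as printed appears to carry a typo that your method would have exposed.
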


\begin{lm}\label{eigenval}
Let $\varphi: (M^{m}, g) \to (N^{m-1}, h)$ be a submersion and $V$ a vertical vector field. Then
\begin{equation}\label{eigen}
\di V + V\left(\ln \frac{\prod_{i=1}^{m -1} \lambda_i}{\abs{V}}\right)=0,
\end{equation}
where $\lambda_{i}^{2}$ are the eigenvalues of $\varphi^*h$ with respect to $g$.
\end{lm}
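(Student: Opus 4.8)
The plan is to reinterpret the product $\prod_{i=1}^{m-1}\lambda_i$ as the conformal factor relating the pulled-back volume form $\varphi^*\nu_h$ to the horizontal volume form of $g$, and then to read off $\di V$ from the fact that $\varphi^*\nu_h$ is simultaneously closed and annihilated by contraction with the vertical field $V$. Since the statement is local and involves $\ln\abs{V}$, I would work on the open set where $V\neq 0$ and fix local orientations so that $\nu_g$ and $\nu_h$ are genuine volume forms. The central identity to establish is
\begin{equation*}
\varphi^*\nu_h = \frac{\Lambda}{\abs{V}}\,\iota_V\nu_g, \qquad \Lambda := \prod_{i=1}^{m-1}\lambda_i .
\end{equation*}
To verify it I would evaluate both sides on an adapted orthonormal frame $\{V/\abs{V}, e_1,\dots,e_{m-1}\}$ of $g$ chosen so that the $e_i$ are horizontal and diagonalise $\varphi^*h$, whence $\dif\varphi(e_i)=\lambda_i f_i$ with $\{f_i\}$ orthonormal on $N$. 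Then $\varphi^*\nu_h(e_1,\dots,e_{m-1})=\prod_i\lambda_i$, while $\frac{1}{\abs{V}}\iota_V\nu_g(e_1,\dots,e_{m-1})=\nu_g(V/\abs{V},e_1,\dots,e_{m-1})=1$, and both sides vanish as soon as $V$ is inserted; this pins down the identity.

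Next I would observe that $\varphi^*\nu_h$ is closed, being the pullback of the top-degree form $\nu_h$ on $N^{m-1}$ (so $\dif\varphi^*\nu_h=\varphi^*\dif\nu_h=0$), and that $\iota_V\varphi^*\nu_h=0$ because $V$ is vertical, i.e. $\dif\varphi(V)=0$. Cartan's formula then gives immediately
\begin{equation*}
\mathcal{L}_V\,\varphi^*\nu_h = \dif\,\iota_V\varphi^*\nu_h + \iota_V\,\dif\varphi^*\nu_h = 0 .
\end{equation*}

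Finally I would compute the same Lie derivative from the right-hand side of the key identity. Writing $f=\Lambda/\abs{V}$ and using $\mathcal{L}_V\nu_g=(\di V)\nu_g$ together with $\mathcal{L}_V\iota_V=\iota_V\mathcal{L}_V$ (since $\iota_{[V,V]}=0$), I obtain
\begin{equation*}
\mathcal{L}_V\big(f\,\iota_V\nu_g\big) = V(f)\,\iota_V\nu_g + f\,(\di V)\,\iota_V\nu_g = f\big(V(\ln f)+\di V\big)\,\iota_V\nu_g .
\end{equation*}
As $\iota_V\nu_g\neq 0$ wherever $V\neq 0$, comparing with the vanishing from the previous step forces $\di V + V(\ln f)=0$, which is precisely \eqref{eigen}. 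I expect the only real work to be the first identity: one must justify that $\Lambda$, the product of the nonzero eigenvalues, is a smooth (indeed smooth positive) function — which follows from $\varphi$ having constant rank $m-1$ as a submersion, so that $\Lambda^2$ is the Gram determinant of $\dif\varphi$ restricted to the horizontal distribution — and keep track of the orientation bookkeeping, both of which are local and harmless. The remaining steps are formal consequences of Cartan's calculus.
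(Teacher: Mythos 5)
Your argument is correct, but it follows a genuinely different route from the paper. The paper's proof is a frame computation: it takes the unit vertical field $U=V/\abs{V}$ and an orthonormal eigenframe $\{E_i\}$ of $\varphi^*h$, invokes the identity $U(\ln\lambda_i)=g(\nabla_{E_i}E_i,U)$ from the authors' earlier work \cite{LS}, and sums over $i$ to get $\di U+U\bigl(\ln\prod_i\lambda_i\bigr)=0$ before converting from $U$ to $V$. You instead establish the pointwise identity $\varphi^*\nu_h=\tfrac{\Lambda}{\abs{V}}\,\iota_V\nu_g$ and derive \eqref{eigen} from $\mathcal{L}_V\varphi^*\nu_h=0$ via Cartan's formula; the verification of the identity on an adapted frame, together with the observation that both sides lie in the one-dimensional kernel of $\iota_V$ on $(m-1)$-forms, is complete, and the Lie-derivative computation is routine. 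Your approach buys two things: it is self-contained (no appeal to the eigenvalue-derivative formula of \cite{LS}), and it only ever uses the product $\Lambda=\prod_i\lambda_i$, which is automatically smooth and positive as the square root of a Gram determinant, whereas the paper's term-by-term computation tacitly requires a smooth eigenframe and smooth individual $\lambda_i$, a point that can be delicate at eigenvalue crossings. It also exposes the geometric content of the lemma --- the pulled-back volume form is invariant under the flow of any vertical field --- at the modest cost of some local orientation bookkeeping, which you handle correctly. The paper's route is shorter once its cited formula is granted.
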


\begin{proof}
Let $\varphi: (M^{m}, g) \to (N^{m-1}, h)$ be a submersion and $\{E_{i}, U \}_{i=1,\dots,m-1}$ an orthonormal frame of eigenvectors of $\varphi^{*}h$, i.e. $\dif \varphi (U)=0$ and  $\varphi^{*}h(E_i, X)=\lambda_{i}^{2}g(E_i, X)$ for any vector $X$. Since, for any $i$, we have (\cite{LS})
\begin{equation*}
U(\ln\lambda_{i})=g(\nabla_{E_{i}}E_{i}, U),
\end{equation*}
then
$$
\di U + U\left(\ln \prod_{i=1}^{m -1} \lambda_i\right)=0,
$$
and Equation \eqref{eigen} follows.
\end{proof}

\begin{proof}[Proof of Proposition \ref{subm}.]
Let $(M^m, g, J)$ be a K\"ahler manifold, if $V$ is gradient vector field on $M$, $V = \gr F$, then $JV$ is is a Killing vector field if and only if $V$ is (real) holomorphic. Then Equation \eqref{biholgk} from Proposition \ref{gck} translates into
\begin{equation}\label{k+}
\di V + V\left(\ln F^{\frac{4}{m-6}}\right)=0, 
\end{equation}
for $m\neq 6$, where $F = e^{\frac{m-6}{2}\gamma}$, and 
\begin{equation}\label{k++}
\di V + V\left( 2F \right)=0, 
\end{equation}
for $m = 6$, where $F = \gamma$. Conclude by combining with Equation \eqref{eigen}.
\end{proof}

\begin{ex}\label{cn}
Let $\gamma$ be a function on the complex Euclidean space $(\CC^{n}, can)$. In standard real coordinates, a Killing vector field has the general form
$$
\xi(x_1,...,x_{2n})=(-\alpha_1 x_2, \alpha_1 x_1, -\alpha_2 x_4, \alpha_2 x_3, ...),
$$
where $\{\alpha_{k}\}_{k=1,...,n}$ are real constants. Therefore $J (e^{(n-3)\gamma}\gr\gamma)$ is a Killing vector field if and only if 
\begin{align*}
\gamma(x)&=\tfrac{1}{n-3}\ln\left(\sum_{k=1}^{n} \frac{\alpha_{k}}{2}(x_{2k-1}^{2}+x_{2k}^{2})\right), \quad \text{when} \  n\neq 3,\\
\gamma(x)&=\sum_{k=1}^{3} \frac{\alpha_{k}}{2}(x_{2k-1}^{2}+x_{2k}^{2}), \quad \text{when} \ n = 3.
\end{align*}
This function is a solution of Equation \eqref{biholgk} if and only if $n=2$ and $\alpha_{1}=\alpha_{2}$, and by Proposition \ref{gck}, any holomorphic map from $\displaystyle{(\CC^{2} \setminus \{0\} , \abs{z}^{-4} can)}$ into a $(1,2)$-symplectic almost Hermitian manifold will be biharmonic.
\end{ex}

The following example can be seen as a generalization of Example \ref{cn} as $(\CC^{2}\setminus \{0\}, can)$  is isometric to the cone over the 3-sphere. 
  
\begin{ex}\label{cone}
Let $(\mathcal{C}(S), J, \widehat{g})$ be the K\"ahler cone over a Sasakian manifold $(S^{2n-1}, \phi, \xi, \eta, g)$, that is $\mathcal{C}(S)= \RR_{+}^{*} \times S$ endowed with the almost complex structure
$$
J\left(X, f \partial_r \right)=(\phi X - f\xi, \eta(X) \partial_r).
$$
and the warped product metric $\widehat{g}= \dif r^2 + r^{2}g$.

Let $\gamma=\gamma(r)$ be a smooth real function on $\mathcal{C}(S)$. In this case, Equation \eqref{biholgk} becomes 
$$
\gamma^{\prime \prime} + (2n-1)\frac{\gamma^{\prime}}{r} + (n-1)(\gamma^{\prime})^2=0
$$
with the solution $\gamma = -\ln r^2$. For this function $J (e^{(n-3)\gamma}\gr\gamma)$ is a Killing vector field if and only if $\gr r^{6-2n}$ is holomorphic which is the case only when $n=2$. Therefore, if $S$ is a Sasakian 3-dimensional manifold, then, by Proposition 3, any holomorphic map from the $(\mathcal{C}(S), r^{-4} \widehat{g})$ into a $(1,2)$-symplectic almost Hermitian manifold must be biharmonic. 
\end{ex}

\begin{ex}\label{inoue}
Let $H = \{w=w_1 + \ii w_2 \in \CC \vert w_2 >0 \}$ be the upper half plane and $(K, k, J^K)$ be a K\"ahler manifold of complex dimension $n-1$. Consider the product $H \times K$ endowed with the K\"ahler metric
$$
g = f^2(w_2) \dif w \otimes \dif \ov{w} + k,
$$
where $f:(0,\infty)\to \RR$ is a smooth function. Let $\gamma=\gamma(w_2)$ be a smooth real function on $H \times K$. For $n\neq 3$, the conditions of Proposition \ref{gck} become
$$
f F^{\prime\prime} - 2f^{\prime}F^{\prime} = 0,
$$
and
$$
(n-3)F F^{\prime\prime} + 2(F^{\prime})^2 = 0,
$$
where $F=e^{(n-3)\gamma}$. For $n = 3$, the same conditions are, respectively 
$$
f \gamma^{\prime\prime} - 2f^{\prime}\gamma^{\prime} = 0
\quad \text{and} \quad
\gamma^{\prime\prime} + 2(\gamma^{\prime})^2 = 0,
$$ 
with the solution 
$$
\gamma=\tfrac{1}{n-1}\ln w_2, \quad f(w_2)= w_{2}^{-\frac{1}{n-1}},
$$
for any $n$.

\noindent This actually produces examples in all dimensions, since any holomorphic map from the warped product $(H \times K^{n-1} , g=\dif w \otimes \dif \ov{w} + w_{2}^{\frac{2}{n-1}} k)$ into a $(1,2)$-symplectic almost Hermitian manifold will be biharmonic.
\end{ex}

\begin{re} Examples \ref{cn} and \ref{cone} can be recovered from Proposition \ref{subm} with the radial projection 
$$
(\RR_{+}^{*} \times S, \widehat{g}) \to (S, g)
$$
and Example \ref{inoue} with the projection along the second component
$$
H \times K \to \RR \times K, \quad (w_1, w_2, x) \mapsto (w_1, x).
$$
where $H \times K$ is endowed with the (K\"ahler) metric 
$$g= w_{2}^{4/(2-m)}(\dif w_{1}^{2} +\dif w_{2}^{2})+k$$ and $\RR \times K$ with the product metric $h=\dif w_{1}^{2}+k$.

In all these cases the horizontal distribution is integrable.
\end{re}

\begin{re} Let $(M,g,J)$ be a K\"ahler manifold and $\gamma$ a smooth function on $M$ such that $Id :(M, e^{2\gamma}g) \to (M,g)$ is a \emph{biharmonic morphism} (\cite{LOu}). Then any holomorphic map from $(M,e^{2\gamma}g,J)$ into a $(1,2)$-symplectic manifold is biharmonic.  

This alternative construction of biharmonic holomorphic maps can only overlap with this paper for $\dim M=4$. 
This is the case of Example \ref{cn} (which up to an isometry was given in \cite{LOu}), but not of Example \ref{inoue} since $Id :(H\times \CC , e^{2\gamma}g) \to (H\times \CC, g)$ is not a biharmonic morphism.
\end{re}

\end{document}